\theoremstyle{plain}
\newtheorem{theorem}{Theorem}[section]
\newtheorem{lemma}[theorem]{Lemma}
\newtheorem{corollary}[theorem]{Corollary}
\newtheorem{proposition}[theorem]{Proposition}
\title{\huge Overlapping time of a virtual customer in time-varying many-server queues}
\author[1,2]{Young Myoung Ko \thanks{Email: youngko@postech.ac.kr}}
\author[3]{Jin Xu \thanks{Corresponding author. Email: xu\_jin@hust.edu.cn}}
\affil[1]{Department of Industrial and Management Engineering, Pohang University of Science and Technology, 77, Cheongam-ro, Nam-gu, Pohang, Gyeongbuk, 37673, Republic of Korea}
\affil[2]{Open Innovation Big Data Center, Pohang University of Science and Technology, 77, Cheongam-ro, Nam-gu, Pohang, Gyeongbuk, 37673, Republic of Korea.}
\affil[3]{School of Management, Huazhong University of Science and Technology, Wuhan 430074, Hubei, China}
\date{}
\begin{document}
\maketitle
\begin{abstract}
 Motivated by the ongoing COVID-19 pandemic, this paper investigates customers' infection risk by evaluating the overlapping time of a virtual customer with others in queueing systems. Most of the current methodologies focus on characterizing the risk in stationary systems, which may not apply to the more practical time-varying systems. As such, we propose an approximation framework that relies on the fluid limit to compute the expected overlapping time in time-varying queueing systems. Simulation experiments verify the accuracy of our approach.
\end{abstract}

\textbf{Keywords:
Overlapping time, Uniform acceleration, Fluid model,  Adjusted fluid model
}

\section{Introduction}
The service system is one of the highly risky areas during pandemic seasons where customers are in close contact with each other. Measuring the infection risk of customers in service systems is crucial for providing guidance to service providers on system design and operations. However, computing the infection risk in service systems is challenging due to many systems having a time-varying nature. For instance, the arrival process of customers into grocery stores and hospitals is usually non-stationary or cyclic due to peak and off-peak hours. 

In this research, we aim to measure the customers' infection risk in a multi-server time-varying queueing system by considering the \emph{overlapping time} of a virtual customer arriving at an arbitrary time. Overlapping time measures the time that the virtual customer would overlap with other customers during her sojourn time, thus is an effective metric to evaluate the likelihood that customers get infected while waiting in service systems  \cite{Kang2022, Perlman2020, Perlman2021, Palomo2021, Pender2021}. The longer a customer overlaps with other customers who may carry the virus, the more likely that she will get infected (see Sun and Zhai \cite{Sun2020}). 

Most of the current studies consider the customers' infection risk in stationary systems. For instance, Kang et al. \cite{Kang2022} considered the \textit{system-specific basic reproduction rate} in stationary Markovian queues. Perlman and Yechiali \cite{Perlman2020} used the factorial moment of the queue length to measure the risk for customers in shopping and cashier areas with a stationary queueing setting.  Perlman and Yechiali \cite{Perlman2021} further investigated the infection risk based on customers' decisions to join or balk in Markovian queues. 
Palomo and Pender \cite{Palomo2021} derived the steady-state distribution of the overlapping time for the \(M/M/1\) queue and obtained new conditional moments provided waiting or overlapping time of a customer. The methodologies used in these studies mainly work on characterizing the steady-state behavior of stationary queueing systems. 

Pender and Palomo \cite{Pender2021} is the most relevant work to our research, where they derived the number of overlaps for the infinite-server queue with time-varying arrivals. Instead, we consider a more realistic but challenging scenario with a finite number of servers, where the exact performance analysis is usually intractable when the system is time-varying.

To deal with the challenges in analyzing the overlapping time in time-varying multi-server systems, we propose an approximation framework that relies on the \textit{fluid limit}. However, the standard fluid limit approximation turns out to be inaccurate when the system is near the critically loaded point \cite{Ko2013, Massey2013}. We use the \textit{adjusted fluid model} (see \cite{Ko2013}) to tackle this issue, and the adjusted model significantly improves the approximation quality. We extend our discussion to the infinite-server system and derive the \textit{closed-form expression} of the overlapping time when the arrival rate has a cyclic form. Numerical experiments verify the accuracy of the analytical solution. Our approximation framework can be used to evaluate the infection risk of customers arriving at different times in a service system, which can further assist service providers in adjusting their operating hours and making disinfecting plans.

The remainder of this paper is organized as follows. Section~\ref{sec:prob} explains the problem of interest. Section~\ref{sec:multiserver} proposes an approximation method for estimating the mean overlapping time using the (adjusted) fluid limit. Section~\ref{sec:infinite} derives a closed-form expression for the overlapping time in an infinite-server setting. Section~\ref{sec:conclusion} wraps up the paper and discusses possible future extensions.

\section{Problem description} \label{sec:prob}
Motivated by a grocery store with multiple cashiers, we consider a time-varying multi-server queue. The arrival process of customers follows a non-homogeneous Poisson process with rate \(\lambda(t)\), and the service times follow an exponential distribution with parameter \(\mu\). The number of servers (cashiers) is \(n\). So our system model is an \(M_t/M/n\) queue. 
We aim to estimate the \textit{overlapping time} of a virtual customer who arrives at the system at time $t$. The overlapping time is defined as the total time this virtual customer would overlap with the other customers if she enters the queue, during her sojourn time (from arrival to departure) in the system. For example, suppose a virtual customer spends three minutes in the system, and two other customers are present during her stay. Then, the overlapping time is \(3 \text{ minutes} \times 2 \text{ customers} = 6 \text{ minutes}\). 

\section{Overlapping time for \(M_t/M/n\) queues}\label{sec:multiserver}
Let \(X(t)\) denote the number of customers in the system at time \(t\). Then, \(X(t)\) is the solution to the following integral equation:
\begin{equation}
X(t) = X(0) + Y_1\left(\int_{0}^{t} \lambda(s)ds\right) - Y_2\left(\int_{0}^{t} \mu \min(X(s),n) ds\right).\label{eqn:sample_path_mmn}
\end{equation}

The terms \(Y_1(\cdot)\) and \(Y_2(\cdot)\) are independent unit-rate Poisson processes, and they count the number of arrivals and departures until time \(t\), respectively. We note that this representation is widely used in queueing literature to describe the sample path of a time-varying Markovian queue, e.g., \cite{Kurtz1978, Mandelbaum1998, Mandelbaum2002, Ko2010, Ko2013, Pender2017}.

When we have Lipschitz rate functions \(f_i(t,x)\) and define \(F_i(t,x)=\int_0^t f_i(s,x)ds\), then the composition \(Y_i\circ F_i\) forms a non-homogeneous Poisson process with parameter \(f_i(t,x)\). In equation~\eqref{eqn:sample_path_mmn}, we have \(f_1(t,x) = \lambda(t)\) and \(f_2(t,x) = \mu \min\left(x,n\right)\); both are Lipschitz with respect to \(x\).

Suppose the virtual customer \textbf{C} arrives at time \(\tau\). We want to calculate the \textit{overlapping time} of \textbf{C} with other customers in the system during \textbf{C}'s stay. To do so, we define a new process \(\{Z(\tau, t), t\ge 0\}\) as the number of customers that customer \textbf{C} met in the system at time $\tau$ and are still present in the system at time $\tau + t$. Therefore, for $t\geq 0$, we have:
\begin{equation}
Z(\tau, t) = X(\tau) - Y_2\left(\int_{\tau}^{\tau+t}\mu \min(Z(\tau,s),n)ds\right).\label{eqn:z_mmn}
\end{equation}

We define \(t_0\) to be the first time that \(Z(\tau,t_0)\) becomes less than or equal to \(n-1\):
\begin{align*}
    t_0 = \inf\{t \ge 0: Z(\tau, t) \le n-1\}.
\end{align*}

Let \(S_{\tau}\) and \(O_{\tau}\) be the service time and overlapping time of the customer arriving at time \(\tau\), respectively. Then, this customer overlaps with other customers for at least \(t_0\) and at most \(t_0 + S_{\tau}\) amount of time. The total overlapping time \(O_{\tau}\) can be calculated by integrating \(X(t)\) over the overlapping period, and it satisfies
\[\int_{\tau}^{\tau+t_0} X(t) dt \le O_{\tau} \le \int_{\tau}^{\tau+t_0+S_{\tau}} X(t) dt. \]

The idle time of the system (or the idle probability) under a heavy-traffic condition tends to be short (or small). In such cases, we approximate the expected overlapping time as
\begin{align}
E\left[O_{\tau}\right] &\approx E\left[\int_{\tau}^{\tau+t_0+S_{\tau}} X(t) dt\right].\label{eqn:expected_overlap_approx_mmn}
\end{align}

Calculating the expected overlapping time in \eqref{eqn:expected_overlap_approx_mmn} is intractable in general. What we will do next is to approximate the RHS of \eqref{eqn:expected_overlap_approx_mmn} using the fluid limit. 

\subsection{Fluid limit}
Consider a sequence of stochastic processes, \(\{X^{\eta}(t), t\ge 0\}\) that satisfies the following integral equation:
\begin{align}
X^{\eta}(t) &= X^{\eta}(0) + Y_1\left(\int_{0}^{t}\eta \lambda(s)ds\right)\nonumber \\
&\quad- Y_2\left(\int_{0}^{t} \mu \min(X^{\eta}(s),\eta n) ds\right).\label{eqn:sequence_mmn}
\end{align}
In Equation~\eqref{eqn:sequence_mmn}, \(X^{\eta}(t)\) accelerates the arrival rate and the number of servers by \(\eta\). By taking \(\eta \rightarrow \infty\), we obtain the fluid limit as follows:

\begin{lemma}
\[\lim_{\eta\rightarrow\infty}\frac{X^{\eta}(t)}{\eta} = x(t), \text{ a.s. for } t\in [0, T],\]
where \(T>0\) and \(x(t)\) is the solution to the following ODE:
\[\frac{d}{dt}x(t) = \lambda(t) - \mu \min\left(x(t),n\right) \text{ with } x(0)=\lim_{\eta\rightarrow\infty}\frac{X^{\eta}(0)}{\eta}.\]
\begin{proof}
The result follows directly from Kurtz \cite{Kurtz1978} and Mandelbaum et al. \cite{Mandelbaum1998}.
\end{proof}
\end{lemma}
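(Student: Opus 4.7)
The plan is to follow the classical Kurtz functional strong law of large numbers, adapted to time-varying multi-server queues as in Mandelbaum, Massey, and Reiman. First I would introduce the scaled process $\bar X^\eta(t) := X^\eta(t)/\eta$. Dividing equation~\eqref{eqn:sequence_mmn} by $\eta$ and using $\int_0^t \mu\min(X^\eta(s),\eta n)\,ds = \eta\int_0^t \mu\min(\bar X^\eta(s),n)\,ds$, I would rewrite the equation as
\begin{equation*}
\bar X^\eta(t) = \bar X^\eta(0) + \frac{1}{\eta}Y_1\!\left(\eta\int_0^t\lambda(s)\,ds\right) - \frac{1}{\eta}Y_2\!\left(\eta\int_0^t\mu\min(\bar X^\eta(s),n)\,ds\right).
\end{equation*}
By Picard--Lindel\"of, the candidate limit ODE has a unique global solution, since $y\mapsto\min(y,n)$ is $1$-Lipschitz and $\lambda$ is assumed locally integrable.

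Next I would invoke the functional SLLN for the unit-rate Poisson process: for any $M>0$,
\begin{equation*}
\sup_{0\le u\le M}\left|\tfrac{1}{\eta}Y_i(\eta u) - u\right|\longrightarrow 0 \text{ a.s.\ as } \eta\to\infty,\qquad i=1,2.
\end{equation*}
Taking $M := \max\{\int_0^T\lambda(s)\,ds,\,\mu n T\}$ covers both time-change arguments, because $\min(\bar X^\eta(s),n)\le n$ keeps them inside $[0,M]$ uniformly in $t\in[0,T]$. This reduces the scaled equation to $\bar X^\eta(0)+\int_0^t\lambda(s)\,ds - \int_0^t\mu\min(\bar X^\eta(s),n)\,ds$ plus an error that vanishes a.s.\ uniformly on $[0,T]$.

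Then I would compare $\bar X^\eta(t)$ with $x(t)$ by subtracting the integral form of the limiting ODE. Using the $1$-Lipschitz property of $\min(\cdot,n)$ and the triangle inequality, I would obtain
\begin{equation*}
|\bar X^\eta(t)-x(t)|\;\le\;\varepsilon_\eta(T) + \mu\int_0^t|\bar X^\eta(s)-x(s)|\,ds,
\end{equation*}
where $\varepsilon_\eta(T)\to 0$ a.s.\ (collecting the two Poisson error terms together with the initial-condition gap $|\bar X^\eta(0)-x(0)|$). Gronwall's inequality then yields $\sup_{0\le t\le T}|\bar X^\eta(t)-x(t)|\le \varepsilon_\eta(T)\,e^{\mu T}\to 0$ a.s., which is the claim.

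The hard part will be justifying that the functional SLLN transfers across the \emph{random} inner time-change in the second Poisson term, which itself depends on $\bar X^\eta$. The standard trick is the a priori bound $\min(\bar X^\eta(s),n)\le n$, which confines the random argument of $Y_2$ to the fixed compact interval $[0,M]$ and lets the uniform convergence on $[0,M]$ be applied pathwise. Once this is in place the Gronwall step is routine and the result follows, exactly as in \cite{Kurtz1978,Mandelbaum1998}.
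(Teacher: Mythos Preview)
Your proposal is correct and is exactly the Kurtz-type argument the paper invokes by citation; you have simply unpacked the functional SLLN plus Gronwall machinery that underlies \cite{Kurtz1978,Mandelbaum1998}, whereas the paper's own proof consists of a one-line appeal to those references. Nothing further is needed.
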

So, for a sufficiently large \(\eta\), we can approximate \(X^{\eta}(t)\) as
\[X^{\eta}(t) \approx \eta x(t), \text{ for } t \in [0,T].\]

To compute the overlapping time of a virtual customer arriving at time \(\tau\), we consider a sequence of stochastic processes, \(\{Z^{\eta}(\tau,t), t\ge 0\}\),  satisfying
\begin{align*}
    Z^{\eta}(\tau, t) = X^{\eta}(\tau) - Y_2\left(\int_{\tau}^{\tau+t} \mu \min(Z^{\eta}(\tau,s),\eta n)ds\right).
\end{align*}
Then, we have the fluid limit as follows:
\begin{proposition}
\[\lim_{\eta \rightarrow \infty} \frac{Z^{\eta}(\tau,t)}{\eta} = z(\tau, t) \text{  a.s. for } t \in [0,T],\]
where \(T>0\) and \(z(\tau,t)\) is the solution to the following ODE:
\[\frac{d}{dt}z(\tau,t) = -\mu \min(z(\tau,t),n)\text{ provided } z(\tau,0)=x(\tau).\]
\begin{proof}
The result follows from Kurtz \cite{Kurtz1978} and Mandelbaum et al. \cite{Mandelbaum1998}.
\end{proof}
\end{proposition}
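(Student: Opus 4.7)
The plan is to mimic the classical functional strong law of large numbers (FSLLN) argument used for $X^{\eta}(t)$ in the preceding lemma, adapted to the departure-only process $Z^{\eta}(\tau,t)$ whose initial value is itself the random quantity $X^{\eta}(\tau)$. Since the statement points to Kurtz \cite{Kurtz1978} and Mandelbaum et al.\ \cite{Mandelbaum1998}, I would reduce the proof to a direct application of their framework by verifying the relevant hypotheses here.

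First, I would center the unit-rate Poisson process by writing $Y_2(u)=u+\tilde Y_2(u)$, where $\tilde Y_2$ is a square-integrable martingale satisfying the FSLLN $\sup_{0\le u\le A}|\tilde Y_2(\eta u)|/\eta\to 0$ a.s.\ for every finite $A$. Dividing the defining equation of $Z^{\eta}(\tau,t)$ by $\eta$ and using the scaling $\min(\eta x,\eta n)=\eta\min(x,n)$, I obtain
\[
\frac{Z^{\eta}(\tau,t)}{\eta}=\frac{X^{\eta}(\tau)}{\eta}-\int_{0}^{t}\mu\min\!\left(\frac{Z^{\eta}(\tau,\tau+u)}{\eta},n\right)du-\frac{1}{\eta}\tilde Y_2\!\left(\int_{\tau}^{\tau+t}\mu\min(Z^{\eta}(\tau,s),\eta n)\,ds\right).
\]
Because $\min(Z^{\eta}(\tau,s),\eta n)\le\eta n$, the argument of $\tilde Y_2$ is bounded by $\eta\mu nT$, so the FSLLN applied at scale $A=\mu nT$ forces the martingale term to vanish almost surely, uniformly in $t\in[0,T]$.

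Next, I would invoke the preceding lemma to obtain $X^{\eta}(\tau)/\eta\to x(\tau)$ a.s., which is exactly the initial condition $z(\tau,0)=x(\tau)$ of the target ODE. Writing that ODE in integral form and subtracting from the displayed equation, then using the fact that $x\mapsto\mu\min(x,n)$ is globally Lipschitz with constant $\mu$, I arrive at
\[
\left|\frac{Z^{\eta}(\tau,t)}{\eta}-z(\tau,t)\right|\le\varepsilon_{\eta}(T)+\mu\int_{0}^{t}\left|\frac{Z^{\eta}(\tau,\tau+u)}{\eta}-z(\tau,u)\right|du,
\]
with $\varepsilon_{\eta}(T)\to 0$ a.s. Gronwall's inequality then yields uniform almost sure convergence on $[0,T]$, which is the claim.

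I expect the main obstacle to be bookkeeping rather than conceptual: one has to propagate the a.s.\ convergence at the random initial time $\tau$ from the previous lemma (since $X^{\eta}(\tau)$ is random in $\eta$), and one has to align the time variable inside the Poisson integral (which runs over $[\tau,\tau+t]$) with the second argument of $Z^{\eta}$ (measured from $\tau$). Once the time shift is handled and the Lipschitz bound is in place, the convergence follows immediately from the standard Kurtz framework, which is why the authors are content to cite \cite{Kurtz1978, Mandelbaum1998} rather than repeat the argument.
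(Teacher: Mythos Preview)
Your proposal is correct and is exactly the argument underlying the paper's one-line appeal to Kurtz and Mandelbaum et al.: you have simply unpacked the standard FSLLN-plus-Gronwall machinery those references supply. The time-shift bookkeeping you flag is inherited from the paper's own notation for $Z^{\eta}$ and does not affect the argument.
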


Let \(t_0^{\eta}\) be the first time that \(Z^{\eta}(\tau,t)\) gets less than or equal to \(\eta (n-1)\):
\begin{equation}
\begin{aligned}\label{eqn:t_0_eta}
    t_0^{\eta} = \inf\{t \ge 0: Z^{\eta}(\tau, t) \le \eta (n-1)\}.
\end{aligned}
\end{equation}

Then, we have the following corollary: 
\begin{corollary}
\[\lim_{\eta\rightarrow \infty} t_0^{\eta} = t_0 \text{  a.s.,}\]
where 
\begin{equation*}
t_0 = \inf\{t\ge0: z(\tau,t)\le n-1\}
\end{equation*}
\begin{proof}
This is a special case of the result in Ethier and Kurtz \cite{Kurtz86}.
\end{proof}
\end{corollary}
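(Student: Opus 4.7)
The plan is to deduce the almost sure convergence of the hitting times from the almost sure uniform convergence $Z^{\eta}(\tau,\cdot)/\eta \to z(\tau,\cdot)$ on compact intervals provided by the Proposition, combined with the fact that the fluid trajectory $z(\tau,\cdot)$ crosses the level $n-1$ strictly (transversally). First I would rewrite the hitting time definition in scaled form,
\[
t_0^{\eta} = \inf\bigl\{t\ge 0 : Z^{\eta}(\tau,t)/\eta \le n-1\bigr\},
\]
so that both $t_0^{\eta}$ and $t_0$ are first passage times of paths on the same scale to the level $n-1$. Then I would invoke the general first passage time continuity result from Ethier and Kurtz (e.g., their continuity lemma for hitting times) which says that if $f_{\eta} \to f$ uniformly on compact intervals and $f$ crosses the level strictly, then the first passage times of $f_{\eta}$ to that level converge to the first passage time of $f$.

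The main substantive step is verifying the transversal crossing condition. Assuming the non-degenerate case $x(\tau) > n-1$ (so $t_0 > 0$), the ODE
\[
\frac{d}{dt}z(\tau,t) = -\mu\min(z(\tau,t),n)
\]
implies that whenever $z(\tau,t)>0$, the derivative is strictly negative, so $z(\tau,\cdot)$ is strictly decreasing on an interval containing $t_0$. At $t=t_0$, the value is exactly $n-1$, and the right derivative equals $-\mu(n-1)<0$. Hence for every $\varepsilon>0$ sufficiently small we have $z(\tau,t_0-\varepsilon) > n-1 + \delta$ and $z(\tau,t_0+\varepsilon) < n-1 - \delta$ for some $\delta = \delta(\varepsilon) > 0$. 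Combined with the uniform convergence $\sup_{s\in[0,T]} |Z^{\eta}(\tau,s)/\eta - z(\tau,s)| \to 0$ a.s., this sandwiches $t_0^{\eta}$ inside $[t_0-\varepsilon, t_0+\varepsilon]$ for all large $\eta$, yielding $t_0^{\eta} \to t_0$ a.s.

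The main obstacle is the boundary case where the fluid trajectory may only touch $n-1$ tangentially or stay at it (for example, if $x(\tau) = n-1$, or if the trajectory is identically $n-1$ over an interval). In our setting this does not occur: because $\dot z = -\mu\min(z,n) < 0$ whenever $z>0$, the trajectory strictly decreases and cannot linger at the level $n-1$, so transversality is automatic as long as $t_0$ is finite. The degenerate case $x(\tau) \le n-1$ makes $t_0=0$, and the same monotonicity argument together with uniform convergence gives $t_0^{\eta} \to 0$ as well. Packaging these cases together and quoting the Ethier–Kurtz continuous mapping style lemma for hitting times completes the proof.
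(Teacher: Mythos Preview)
Your argument is correct and is essentially an unpacking of the very Ethier--Kurtz hitting-time continuity result that the paper invokes in one line: you supply the two ingredients (uniform convergence of $Z^{\eta}(\tau,\cdot)/\eta$ on compacts from the Proposition, and strict downward crossing of the level $n-1$ by the fluid trajectory since $\dot z=-\mu\min(z,n)<0$) and then carry out the standard $\varepsilon$--$\delta$ sandwich. So the approach is the same, only made explicit; the paper simply cites the lemma, whereas you reproduce its proof in this specific setting.
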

\begin{theorem}\label{thm:conv_as}
For \(t, \tau, \tau+ t_0 \in [0,T]\),
\begin{align}
   &\lim_{\eta\rightarrow \infty} \int_{\tau}^{\tau+t_0^{\eta}}\frac{X^{\eta}(t)}{\eta}dt = \int_{\tau}^{\tau+t_0} x(t) dt \text{  a.s.} \label{eqn:conv_int_x}\\
   &\lim_{\eta\rightarrow \infty} \int_{0}^{t_0^{\eta}}\frac{Z^{\eta}(\tau,t)}{\eta}dt = \int_{0}^{t_0} z(\tau,t) dt \text{  a.s.}\label{eqn:conv_int_z}
\end{align}
\begin{proof}
We define 
\[h_{\tau}(x,t) = \int_{\tau}^{\tau+ t}x ds.\]
Then, \(h_{\tau}\) is a continuous function in \((x,t)\). 
 Therefore, by applying the continuous mapping theorem (Chen and Yao \cite{Chen01}), we have 
 \begin{align*}
\int_{\tau}^{\tau+t_0^{\eta}}\frac{X^{\eta}(t)}{\eta} dt &= h_{\tau}\left(\frac{X^{\eta}(t)}{\eta}, t_0^{\eta}\right)\\
&\rightarrow h_{\tau}(x(t), t_0) = \int_{\tau}^{\tau+t_0} x(t) dt \text{  a.s.} 
\end{align*}
\end{proof}
\end{theorem}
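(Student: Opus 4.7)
The plan is to prove \eqref{eqn:conv_int_x}; the argument for \eqref{eqn:conv_int_z} is entirely parallel, simply replacing the fluid limit $x$ by $z(\tau,\cdot)$ from the preceding proposition. Two ingredients do the heavy lifting: (i) the a.s.\ \emph{uniform} convergence $X^{\eta}(\cdot)/\eta \to x(\cdot)$ on compact time intervals, which is the standard strengthening of the pointwise statement in the Lemma provided by Kurtz's framework; and (ii) the a.s.\ convergence $t_0^{\eta} \to t_0$ from the preceding corollary. Continuity of $x(\cdot)$, which is immediate since $x$ solves a Lipschitz ODE and hence is $C^1$, supplies the remaining regularity.

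The main step would be the triangle-inequality decomposition
\begin{equation*}
\int_{\tau}^{\tau+t_0^{\eta}} \frac{X^{\eta}(t)}{\eta}\, dt - \int_{\tau}^{\tau+t_0} x(t)\, dt
= \int_{\tau}^{\tau+t_0^{\eta}} \!\left[\frac{X^{\eta}(t)}{\eta} - x(t)\right] dt \; + \int_{\tau+t_0}^{\tau+t_0^{\eta}} x(t)\, dt.
\end{equation*}
For the first piece, I would bound the integrand in absolute value by $\sup_{s \in [\tau, \tau + t_0 + 1]} |X^{\eta}(s)/\eta - x(s)|$, which tends to $0$ almost surely by (i); since $t_0^{\eta}$ is eventually bounded by $t_0 + 1$ a.s.\ by (ii), the product vanishes a.s. For the second piece, continuity of $x$ on the compact interval $[0,T]$ yields a uniform bound $M$, so its absolute value is at most $M |t_0^{\eta} - t_0|$, which vanishes a.s.\ by (ii).

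Alternatively, as the proof sketched in the paper suggests, one can package this as a single instance of the continuous mapping theorem. Define $H: C([0,T]) \times [0, T-\tau] \to \mathbb{R}$ by $H(f, s) = \int_{\tau}^{\tau+s} f(u)\, du$, equip $C([0,T])$ with the sup-norm topology, and verify joint continuity via precisely the decomposition above. Then (i) and (ii) say that $(X^{\eta}(\cdot)/\eta,\, t_0^{\eta}) \to (x(\cdot),\, t_0)$ a.s.\ in the product topology, and continuity of $H$ delivers the claim.

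The main obstacle I anticipate is bookkeeping: ensuring the integration domains remain inside $[0, T]$ where the uniform fluid limit is valid. This requires showing that $t_0^{\eta}$ is eventually bounded by some constant strictly less than $T - \tau$ on a set of full probability, which follows from the hypothesis $\tau + t_0 \in [0,T]$ together with $t_0^{\eta} \to t_0$ a.s.\ (enlarging $T$ slightly if the hypothesis is met with equality). Once this is in hand the remainder is routine, and the same outline, with $X^{\eta}/\eta$ replaced by $Z^{\eta}(\tau,\cdot)/\eta$ and the lower limit shifted from $\tau$ to $0$, yields \eqref{eqn:conv_int_z}.
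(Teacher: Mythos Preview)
Your proposal is correct and is essentially the paper's argument: the paper invokes the continuous mapping theorem for the functional $h_\tau(x,t)=\int_\tau^{\tau+t}x\,ds$, which is exactly your ``alternative'' formulation, and your triangle-inequality decomposition is precisely the verification of the joint continuity of $H$ that the paper leaves implicit. If anything, your write-up is more careful than the paper's, since you explicitly handle the bookkeeping of keeping $\tau+t_0^\eta$ inside $[0,T]$.
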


Once we have the fluid limit, we then approximate the expected overlapping time by integrating the fluid limit from \(\tau\) to \(\tau+t_0+1/\mu\), so that
\[E[O_{\tau}] \approx \int_{\tau}^{\tau + t_0 + 1/\mu} x(t) dt.\]

The approximation quality of the overlapping time will heavily rely on the approximation accuracy of the fluid limit $x(t)$. Figure~\ref{fig:inaccurate_fluid} compares the fluid limit and simulation when there are 30 servers. The arrival and service rates are 
\(\lambda(t) = 30(0.5 \sin(0.5 t)+1.0)\) and \(\mu = 1.0\), respectively. For simulation, we average independent 10,000 simulation runs. As shown in Figure~\ref{fig:inaccurate_fluid}, the fluid limit does not accurately approximate the mean behavior of the system after it crosses the critically loaded point, i.e., \(n = 30\).

\begin{figure}[htbp]
	\begin{center}
		\includegraphics[width = .45\textwidth]{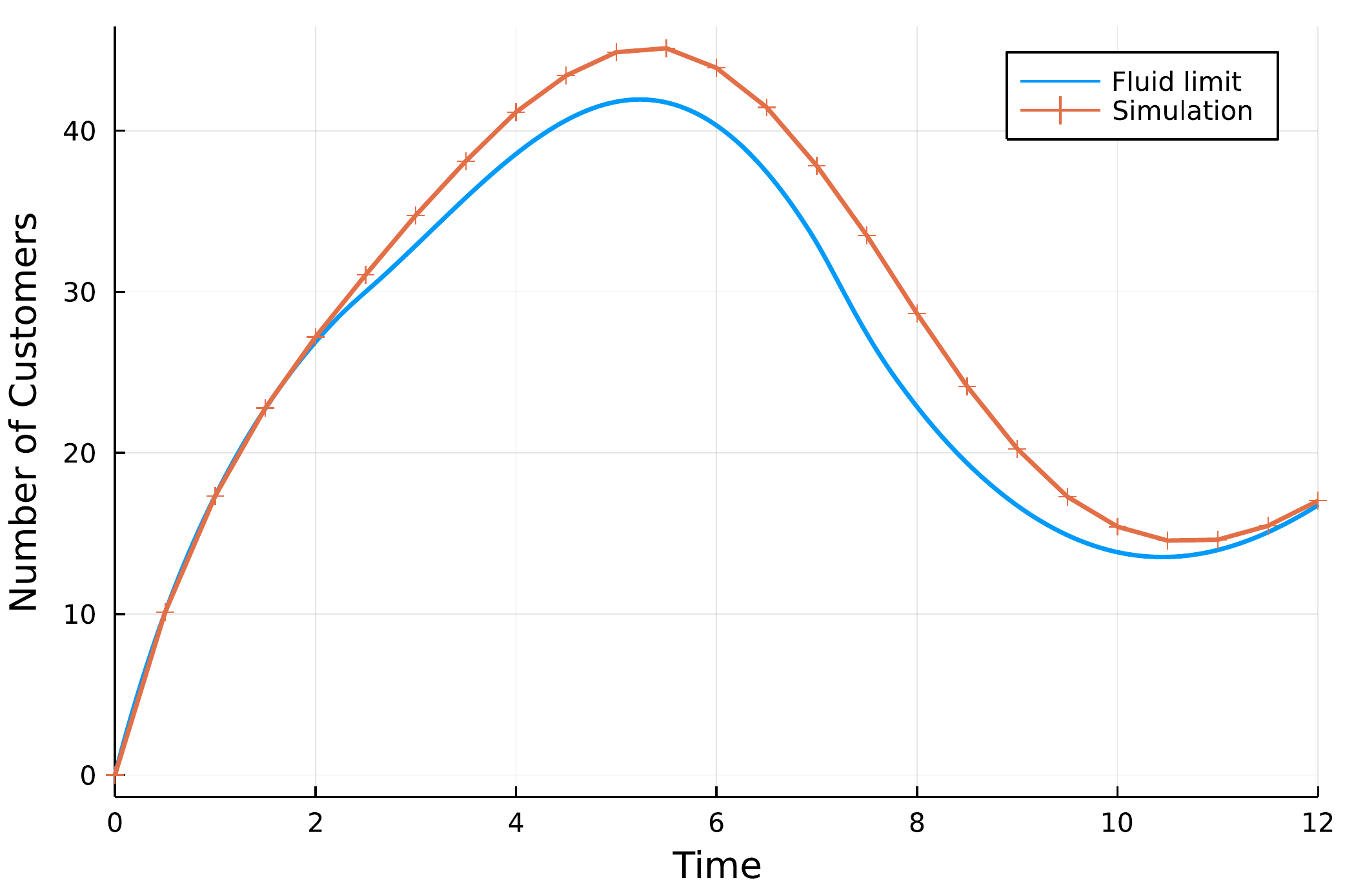}
    \end{center}
\caption{Fluid limit vs. Simulation} \label{fig:inaccurate_fluid}
\end{figure}

\subsection{Adjusted Fluid Model}

Ko and Gautam \cite{Ko2013} pointed out that the fluid limit fails to approximate the system dynamics when the system state is lingering around the critical number---the number of servers \(n\). The problem arises from the nondifferentiability of the rate function \(\min(x,n)\). To overcome this problem, Ko and Gautam \cite{Ko2013} proposed the adjusted fluid and diffusion models that significantly improve approximation accuracy for the dynamics of large-but-finite-sized systems. This result also corresponds to the Gaussian approximation in Massey and Pender \cite{Massey2013}.

In our problem, the accuracy of the fluid approximation is crucial because it directly affects the accuracy of its integration. So, instead of using rate functions \(f_1(t,x) = \lambda(t)\) and \(f_2(t,x) = \mu \min(x,n)\), we use the rate functions for the adjusted fluid model in Ko and Gautam \cite{Ko2013} as follows:
\begin{align*}
g_1(t,x,u) &= \lambda(t) \\    
g_2(t,x,u) &= \mu\left[n + (x-n)\Phi(n,x,\sqrt{u}) - u\phi(n,x,\sqrt{u})\right],
\end{align*}
where \(\Phi(a,b,c)\) and \(\phi(a,b,c)\) are the cumulative distribution function and the probability density function values, respectively, at point \(a\) of the Gaussian distribution with mean \(b\) and standard deviation \(c\).

We consider a sequence of stochastic processes, \(\{X_a^{\eta}(t), t\ge 0\}\), satisfying the following integral equation:
\begin{equation}
\begin{aligned}\label{eqn:sequence_adj_mmn} 
X_a^{\eta}(t) &= X_a^{\eta}(0) + Y_1\left(\int_{0}^{t}\eta g_1(s,X_a^{\eta}(s),Var[X_a^{\eta}(s)])ds\right) \\
&- Y_2\left(\int_{0}^{t}\eta g_2(s,X_a^{\eta}(s),Var[X_a^{\eta}(s))] ds\right).
\end{aligned}
\end{equation}
As seen in \eqref{eqn:sequence_adj_mmn}, the sequence involves the variance of \(X_a^{\eta}(t)\) related to the diffusion limit. Following Ko and Gautam \cite{Ko2013}, we have the adjusted fluid and diffusion limits in the following proposition.

\begin{proposition}[Adjusted fluid and diffusion limits for \(X_a^{\eta}(t)\)]
\begin{align*}
&\lim_{\eta\rightarrow\infty}\frac{X_a^{\eta}(t)}{\eta} = x_a(t)\text{ a.s.}\\
&\lim_{\eta\rightarrow\infty}\sqrt{\eta}\left(\frac{X_a^{\eta}(t)}{\eta} - x_a(t)\right) \overset{d}{=} D(t)\text{ for } t\in[0,T].
\end{align*}
where \(T>0\) and \(D(t)\) is a Gaussian process with zero mean and variance \(u(t)\). We can obtain \(x_a(t)\) and \(u(t)\) by solving the following ODEs \cite{Arnold92}:
\begin{align*}
    &\frac{d}{dt}x_a(t) = g_1\left(t,x_a(t), u(t)\right) - g_2\left(t,x_a(t), u(t)\right)\\
    &\frac{d}{dt}u(t) = -2\mu\Phi\left(n,x_a(t),\sqrt{u(t)}\right)u(t) + g_1\left(t,x_a(t),u(t)\right)\\
    &\quad\quad\quad+ g_2\left(t,x_a(t), u(t)\right).
\end{align*}
\begin{proof}
It is an application of the result in Ko and Gautam \cite{Ko2013}.
\end{proof}
\end{proposition}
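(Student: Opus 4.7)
The plan is to adapt the classical Kurtz-style martingale approach for density-dependent Markov chains to the self-referential system at hand, where the rate functions depend not only on the state but also on the variance of the state. First I would verify that the adjusted rate functions $g_1$ and $g_2$ are jointly Lipschitz in $(x,u)$ on any bounded set. Since $g_1$ is constant in $(x,u)$ and $g_2(t,x,u) = \mu\bigl[n + (x-n)\Phi(n,x,\sqrt{u}) - u\phi(n,x,\sqrt{u})\bigr]$ is a smooth function of $x$ and $u>0$, this is a direct differentiation exercise. Lipschitz continuity guarantees both existence and uniqueness of solutions to the coupled ODE system for $(x_a(t),u(t))$.

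Next I would use the Poisson martingale representation to decompose
\begin{equation*}
X_a^{\eta}(t) = X_a^{\eta}(0) + \int_0^t \eta\bigl[g_1 - g_2\bigr]\bigl(s,X_a^{\eta}(s),\mathrm{Var}[X_a^{\eta}(s)]\bigr)\,ds + M_1^{\eta}(t) - M_2^{\eta}(t),
\end{equation*}
where $M_i^{\eta}(t)$ are martingales whose predictable quadratic variations equal the corresponding compensators. The functional strong law for martingales gives $M_i^{\eta}(t)/\eta \to 0$ almost surely, uniformly on $[0,T]$. Combined with the Lipschitz property and Gronwall's inequality applied to the joint system of the fluid state and its variance, this yields the fluid limit $X_a^{\eta}(t)/\eta \to x_a(t)$ a.s. The key extra step versus the standard Kurtz argument is to treat $u^{\eta}(t) := \mathrm{Var}[X_a^{\eta}(t)]$ as an auxiliary deterministic quantity and show it converges to the deterministic $u(t)$ defined by the second ODE; this is done simultaneously with the fluid convergence by a coupled Gronwall bound on $(|X_a^{\eta}(t)/\eta - x_a(t)|, |u^{\eta}(t) - u(t)|)$.

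For the diffusion limit, I would set $D^{\eta}(t) := \sqrt{\eta}\bigl(X_a^{\eta}(t)/\eta - x_a(t)\bigr)$ and apply the martingale functional CLT to $\sqrt{\eta}\bigl(M_1^{\eta}(t) - M_2^{\eta}(t)\bigr)/\eta$, whose predictable quadratic variation converges to $\int_0^t [g_1+g_2](s,x_a(s),u(s))\,ds$. A first-order Taylor expansion of $g_1-g_2$ around $(x_a(t),u(t))$ linearizes the drift and contributes the term $-2\mu\Phi(n,x_a(t),\sqrt{u(t)})u(t)$ in the variance ODE, which arises from $\partial g_2/\partial x$ evaluated along the fluid path. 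Identifying $u(t) = \mathrm{Var}[D(t)]$ and writing down the Lyapunov-type ODE satisfied by the variance of the linear SDE limit then produces exactly the stated ODE for $u(t)$, as in Arnold \cite{Arnold92}.

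The main obstacle I anticipate is justifying the closure step: the rate functions depend on $\mathrm{Var}[X_a^{\eta}(s)]$, a non-local functional of the process itself, so one cannot directly invoke Kurtz's theorem off the shelf. The resolution, following Ko and Gautam \cite{Ko2013}, is to prove the fluid and variance ODEs are driven jointly by a deterministic fixed-point system and show that $u^{\eta}(t) \to u(t)$ uniformly on $[0,T]$; once that uniform convergence is in hand, the $(x,u)$-Lipschitz property of $g_1,g_2$ lets one absorb the approximation error into the Gronwall estimate and close the argument. All remaining steps are routine modifications of the standard fluid/diffusion limit machinery.
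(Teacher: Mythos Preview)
The paper's own ``proof'' is a single sentence deferring entirely to Ko and Gautam \cite{Ko2013}; no argument is given. Your sketch therefore supplies far more than the paper does, and its backbone---Lipschitz regularity of $g_1,g_2$, the Poisson-martingale decomposition, functional LLN for the fluid limit, martingale FCLT plus linearization for the diffusion limit, and the Lyapunov ODE for the variance---is exactly the machinery underlying the cited result. Your computation that $\partial g_2/\partial x = \mu\Phi(n,x,\sqrt{u})$, which produces the $-2\mu\Phi\,u$ term in the variance ODE, is also correct.

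The one place where your outline is looser than it looks is the closure step. You propose to set $u^{\eta}(t):=\mathrm{Var}[X_a^{\eta}(t)]$ and run a coupled Gronwall bound on $(|X_a^{\eta}/\eta - x_a|,\,|u^{\eta}-u|)$ to get the fluid limit. But under uniform acceleration $\mathrm{Var}[X_a^{\eta}(t)]$ is of order $\eta$, not $O(1)$; the quantity that should converge to $u(t)$ is $\mathrm{Var}[X_a^{\eta}(t)]/\eta$, and that convergence is a diffusion-scale statement, precisely what you are trying to establish. So the coupled Gronwall as written is circular. In the Ko--Gautam framework the circularity is broken differently: one first \emph{defines} the deterministic pair $(x_a,u)$ as the solution of the coupled ODEs, then builds the accelerated process with rates $g_i$ evaluated at the deterministic $u(t)$ (so the rates are now ordinary state-dependent functions and Kurtz applies directly), and finally checks a posteriori that the diffusion limit so obtained has variance exactly $u(t)$. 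The self-consistency is a fixed point at the level of the limiting ODEs, not a limit theorem for $u^{\eta}$. Once reorganized this way your argument closes; as written, the simultaneous-Gronwall step would not.
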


Figure~\ref{fig:accurate_adjusted} illustrates the result of the adjusted limits with the same setting in Figure~\ref{fig:inaccurate_fluid}. We easily notice that the accuracy in approximating the number of customers significantly improves. 

We now show how the adjusted limits can be used to compute the overlapping time. When a customer arrives at time \(\tau\), we consider a sequence of stochastic process, \(\{Z_a^{\eta}(\tau, t), t\ge 0\}\) satisfying the following integral equation:
\begin{equation*}
Z_a^{\eta}(\tau, t) = X_a^{\eta}(\tau) - Y_2\left(\int_{0}^{t}\eta g_2(s,X_a^{\eta}(s),Var[X_a^{\eta}(s))] ds\right).
\end{equation*}

Then, we have the adjusted fluid and diffusion limits for \(\{Z_a^{\eta}(\tau,t), t\ge 0\}\) as follows:
\begin{proposition}[Adjusted limits for \(Z_a^{\eta}(\tau,t)\)]
\begin{align*}
&\lim_{\eta\rightarrow\infty}\frac{Z_a^{\eta}(\tau, t)}{\eta} = z_a(\tau, t)\text{ a.s.}\\
&\lim_{\eta\rightarrow\infty}\sqrt{\eta}\left(\frac{Z_a^{\eta}(\tau, t)}{\eta} - z_a(\tau, t)\right) \overset{d}{=} E(t)\text{ for } t\in[0,T].
\end{align*}
where \(T>0\) and \(E(t)\) is a Gaussian process with zero mean and variance \(v(t)\). We can obtain \(z_a(\tau, t)\) and \(v(t)\) by solving the following ODEs:
\begin{align*}
    &\frac{d}{dt}z_a(\tau, t) =  - g_2\left(t,z_a(\tau, t), v(t)\right)\\
    &\frac{d}{dt}v(t) = -2\mu\Phi\left(n,z_a(\tau, t),\sqrt{v(t)}\right)v(t) \\
    & + g_2\left(t,z_a(\tau, t), v(t)\right).
\end{align*}
\begin{proof}
This is an application of the result in Ko and Gautam \cite{Ko2013}.
\end{proof}
\end{proposition}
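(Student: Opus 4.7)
The plan is to apply the adjusted fluid and diffusion limit theorem of Ko and Gautam \cite{Ko2013} to the pure-death process $Z_a^{\eta}(\tau, \cdot)$, which has the same structural form as $X_a^{\eta}$ but with only a departure term $g_2$ and with random initial value $Z_a^{\eta}(\tau, 0) = X_a^{\eta}(\tau)$. First, I would transfer the initial condition from the preceding proposition: $X_a^{\eta}(\tau)/\eta \to x_a(\tau)$ almost surely, and $\sqrt{\eta}(X_a^{\eta}(\tau)/\eta - x_a(\tau))$ converges weakly to a centered Gaussian with variance $u(\tau)$. We therefore initialize the stated ODEs by $z_a(\tau, 0) = x_a(\tau)$ and $v(0) = u(\tau)$.

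Second, for the fluid limit I would carry out a standard Kurtz decomposition of $Z_a^{\eta}(\tau, t)/\eta$ into its initial value, a centered Poisson martingale scaled by $1/\eta$, and a compensator integral. The martingale term vanishes uniformly on compacts by the functional strong law for Poisson processes, and the Lipschitz continuity of $g_2$ in both $x$ and the variance argument $u$ (inherited from the smoothness of $\Phi$ and $\phi$) closes the argument via Gronwall. This yields $Z_a^{\eta}(\tau, t)/\eta \to z_a(\tau, t)$ almost surely, uniformly on $[0, T]$, where $z_a$ is the unique solution to $\frac{d}{dt} z_a(\tau, t) = -g_2(t, z_a(\tau, t), v(t))$.

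Third, I would apply the Kurtz functional central limit theorem to $E^{\eta}(t) := \sqrt{\eta}(Z_a^{\eta}(\tau, t)/\eta - z_a(\tau, t))$. A direct calculation (cancelling the density-weighted terms) shows $\partial_x g_2(t, x, u) = \mu \Phi(n, x, \sqrt{u})$, so the limit $E(t)$ satisfies a linear Gaussian SDE with linearized drift coefficient $-\mu \Phi(n, z_a(\tau, t), \sqrt{v(t)})$ and infinitesimal variance $g_2(t, z_a(\tau, t), v(t))$ inherited from the Poisson martingale's quadratic variation. The resulting Lyapunov equation for $v(t) = Var(E(t))$ is exactly the second ODE in the statement.

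The main obstacle, and the reason one must invoke \cite{Ko2013} rather than classical Kurtz, is the self-referential appearance of the limit variance $v(t)$ inside the rate function $g_2$ that drives the prelimit process. Ko and Gautam resolve this by a fixed-point argument producing a self-consistent coupled solution of the fluid and variance ODEs, together with a verification that $g_2$ is jointly Lipschitz in its state and variance arguments. Once these ingredients are borrowed, the proof reduces to a direct specialization of their theorem to the pure-death process $Z_a^{\eta}(\tau, \cdot)$ with the random initial condition inherited from $X_a^{\eta}(\tau)$.
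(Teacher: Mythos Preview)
Your proposal is correct and takes essentially the same approach as the paper, which simply states that the result is an application of Ko and Gautam \cite{Ko2013}. You have supplied considerably more detail than the paper's one-line citation, spelling out the Kurtz decomposition, the linearization $\partial_x g_2 = \mu\Phi$, the resulting Lyapunov equation for $v(t)$, and the fixed-point issue with the self-referential variance; all of this is consistent with, and a faithful expansion of, the paper's terse invocation.
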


\begin{figure}[t]
	\begin{center}
		\includegraphics[width = .45\textwidth]{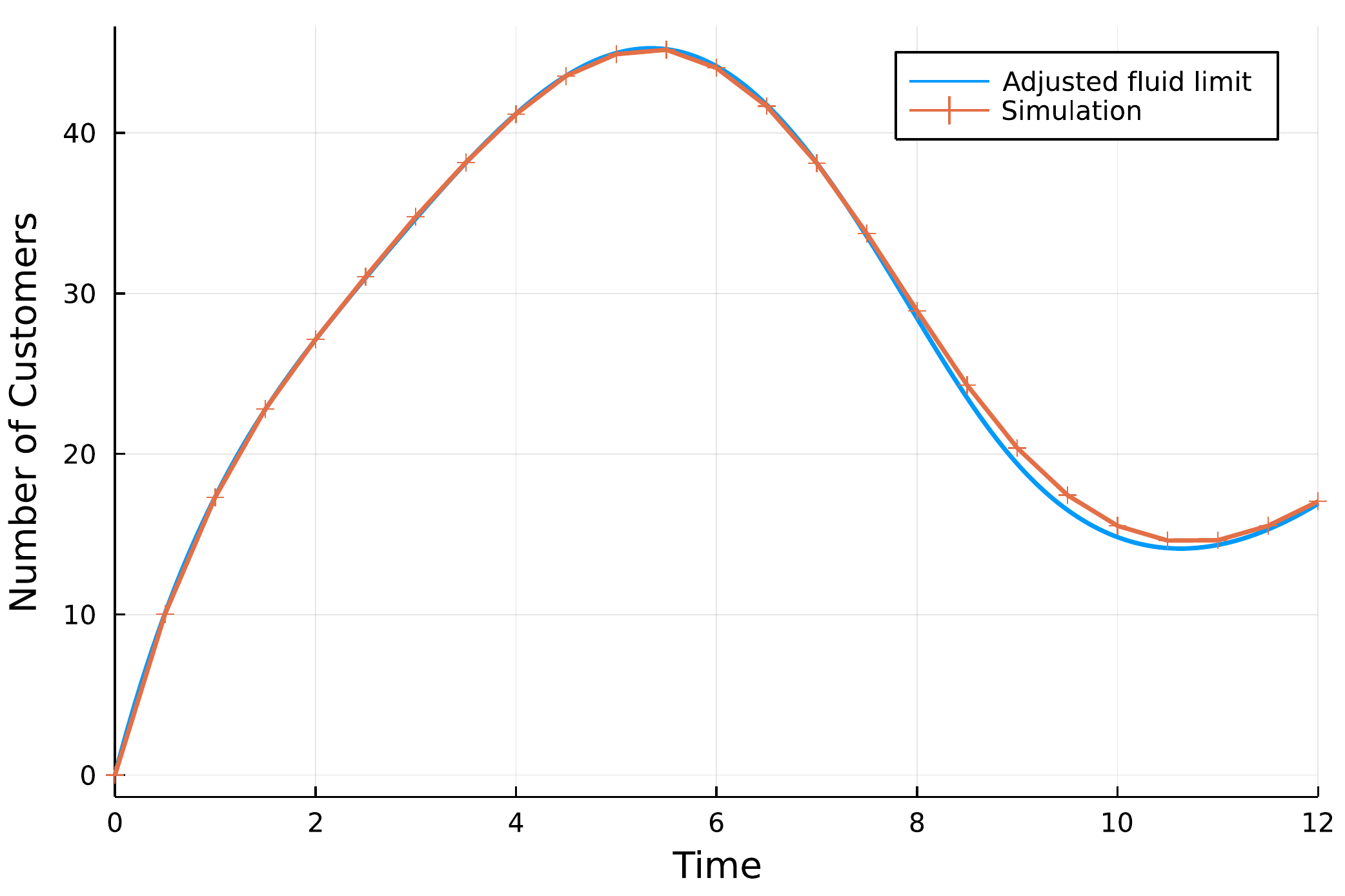}
\end{center}
\caption{Adjusted fluid limit vs. simulation} \label{fig:accurate_adjusted}
\end{figure}

 Let \(t_a^{\eta}\) be the first time that \(Z_a^{\eta}(\tau,t)\) becomes less than or equal to $\eta(n-1)$. Similar to the definition of \(t_0^{\eta}\), we have
\begin{align*}
    t_a^{\eta} = \inf\{t \ge 0: Z_a^{\eta}(\tau, t)\le \eta(n-1)\}.
\end{align*}

In the following corollary, we show that \(t_a^{\eta}\) converges.
\begin{corollary}
\[\lim_{\eta\rightarrow \infty} t_a^{\eta} = t_a \text{  a.s.,}\]
where
\begin{equation*}
t_a = \inf\{t\ge 0: z_a(\tau,t)\le n-1\}.
\end{equation*}
\end{corollary}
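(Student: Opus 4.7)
The plan is to mirror the argument used earlier for $t_0^\eta$ and appeal to the first-passage-time version of the continuous mapping theorem in Ethier and Kurtz \cite{Kurtz86}. From the preceding proposition we have the almost-sure pointwise convergence $Z_a^\eta(\tau,t)/\eta \to z_a(\tau,t)$, which, by the Lipschitz properties of $g_2$ in its state and variance arguments, upgrades to almost-sure uniform convergence on any compact interval $[0,T]$. We also have the limit ODE $\tfrac{d}{dt}z_a(\tau,t) = -g_2(t,z_a(\tau,t),v(t))$, whose right-hand side is strictly negative whenever $z_a(\tau,t) > 0$ because $g_2$ is strictly positive there.

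First I would verify transversality at the crossing. Since $z_a(\tau,\cdot)$ is continuous and strictly decreasing, if $t_a < \infty$ then for every $\epsilon > 0$ there exists $\delta > 0$ with $z_a(\tau, t_a - \epsilon) > n - 1 + \delta$ and $z_a(\tau, t_a + \epsilon) < n - 1 - \delta$. Combining this with the uniform almost-sure convergence, on the full-probability event $\{\sup_{t \in [0,T]} |Z_a^\eta(\tau,t)/\eta - z_a(\tau,t)| < \delta/2 \text{ for all sufficiently large } \eta\}$ the process $Z_a^\eta(\tau,\cdot)/\eta$ still exceeds $n-1$ at time $t_a - \epsilon$ and already lies below $n-1$ at time $t_a + \epsilon$, so $|t_a^\eta - t_a| \le \epsilon$. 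Sending $\epsilon \downarrow 0$ along a countable sequence then yields $t_a^\eta \to t_a$ almost surely.

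The main obstacle is the transversality step, since the continuous-mapping argument for first-passage times breaks down whenever the limit trajectory merely grazes the threshold without crossing it. That scenario is ruled out here because $g_2$ does not vanish while $z_a(\tau,t) > 0$, so $z_a(\tau,\cdot)$ is strictly decreasing until it hits zero; consequently either $t_a < \infty$ with the required transversal crossing, or $z_a(\tau,\cdot)$ remains strictly above $n-1$ on $[0,T]$, in which case, by the same uniform convergence, $t_a^\eta$ also exceeds $T$ for all sufficiently large $\eta$ and the statement on $[0,T]$ holds vacuously.
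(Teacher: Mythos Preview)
Your proposal is correct and follows essentially the same route as the paper: the paper's proof is a one-line appeal to the first-passage-time convergence results in Ethier and Kurtz \cite{Kurtz86} combined with the adjusted-limit convergence of Ko and Gautam \cite{Kurtz1978}, and you have simply unpacked what that appeal entails (uniform convergence on compacts plus a transversal crossing of the level $n-1$). Your added discussion of the transversality condition is a welcome clarification of a point the paper leaves implicit.
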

\begin{proof}
We have the corollary by applying the results in Ethier and Kurtz \cite{Kurtz86} to Ko and Gautam \cite{Ko2013}.
\end{proof}
For the adjusted fluid model, we have a similar result in Theorem~\ref{thm:conv_as} as follows:
\begin{theorem}\label{thm:inte_conv}
For \(t, \tau, \tau+ t_a \in (0,T]\),
\begin{align*}
   &\lim_{\eta\rightarrow \infty} \int_{\tau}^{\tau+t_a^{\eta}}\frac{X_a^{\eta}(t)}{\eta}dt = \int_{\tau}^{\tau+t_a} x_a(t) dt\text{  a.s.} \\
   &\lim_{\eta\rightarrow \infty} \int_{0}^{t_a^{\eta}}\frac{Z_a^{\eta}(\tau,t)}{\eta}dt = \int_{0}^{t_a} z_a(\tau,t) dt \text{  a.s.}
\end{align*}
\begin{proof}
Similar to the proof of Theorem~\ref{thm:conv_as}, the continuous mapping theorem (Chen and Yao \cite{Chen01}) gives the theorem.
\end{proof}
\end{theorem}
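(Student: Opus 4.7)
The plan is to mirror the proof of Theorem~\ref{thm:conv_as}, applying the continuous mapping theorem to a suitable integral-up-to-a-variable-time functional. The three ingredients I need are: (i) a.s.\ uniform-on-compacts convergence of $X_a^{\eta}/\eta$ to $x_a$ and of $Z_a^{\eta}(\tau,\cdot)/\eta$ to $z_a(\tau,\cdot)$, which are the content of the adjusted fluid limit propositions just established; (ii) a.s.\ convergence $t_a^{\eta}\to t_a$, which is the content of the preceding corollary; and (iii) continuity of the integral functional.

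For (iii), I would fix $\tau\in(0,T)$ and define
\[H_\tau:C([0,T])\times[0,T-\tau]\longrightarrow\mathbb{R},\qquad H_\tau(y,s)=\int_{\tau}^{\tau+s}y(u)\,du,\]
with $C([0,T])$ carrying the uniform norm. A one-line splitting,
\[\bigl|H_\tau(y_n,s_n)-H_\tau(y,s)\bigr|\le(T-\tau)\,\|y_n-y\|_\infty+\|y\|_\infty\,|s_n-s|,\]
shows that $H_\tau$ is continuous. Combining (i), (ii), and (iii), the continuous mapping theorem (Chen and Yao~\cite{Chen01}) yields
\[\int_{\tau}^{\tau+t_a^{\eta}}\frac{X_a^{\eta}(u)}{\eta}\,du\;=\;H_\tau\!\left(\frac{X_a^{\eta}}{\eta},\,t_a^{\eta}\right)\;\longrightarrow\;H_\tau(x_a,t_a)\;=\;\int_{\tau}^{\tau+t_a}x_a(u)\,du\quad\text{a.s.,}\]
which is the first claim. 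The second claim follows by the identical argument after replacing $H_\tau$ by $\widetilde H(y,s)=\int_0^{s}y(u)\,du$ on $C([0,T-\tau])\times[0,T-\tau]$ and feeding in $Z_a^{\eta}(\tau,\cdot)/\eta$ together with $t_a^{\eta}$.

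The only genuine obstacle is a domain-bookkeeping issue: to evaluate $H_\tau$ at the random time $t_a^{\eta}$, I need $\tau+t_a^{\eta}\in[0,T]$ for all sufficiently large $\eta$ on a set of full probability. Because the hypothesis gives $\tau+t_a\in(0,T]$ and $t_a^{\eta}\to t_a$ a.s., this is handled either by extending the adjusted fluid limit to an interval $[0,T+\varepsilon]$ (which is permissible in the Ko and Gautam~\cite{Ko2013} framework, since the governing ODEs have smooth right-hand sides) or, equivalently, by restricting to the almost-sure event on which $t_a^{\eta}$ lies eventually within $[0,T-\tau]$. Beyond this, there is nothing harder than in the unadjusted case: $x_a$ is the $C^{1}$ solution of a smooth ODE, hence bounded on $[0,T]$, and the Ko--Gautam convergence is already uniform on compacts almost surely, so every hypothesis of the continuous mapping theorem is met.
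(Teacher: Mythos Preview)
Your proof is correct and follows the same approach as the paper: define the integrate-up-to-a-variable-time functional, note its continuity, and invoke the continuous mapping theorem on the pair $(X_a^{\eta}/\eta,\,t_a^{\eta})$ (respectively $(Z_a^{\eta}(\tau,\cdot)/\eta,\,t_a^{\eta})$). You supply more detail than the paper does---the explicit Lipschitz-style bound for continuity of $H_\tau$ and the domain-bookkeeping remark---but the structure is identical to the paper's argument for Theorem~\ref{thm:conv_as}.
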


Following from Theorem \ref{thm:inte_conv}, we can then approximate the expected overlapping time using the adjusted fluid model as follows:
\begin{equation}
E[O_{\tau}] \approx \int_{\tau}^{\tau + t_a + 1/\mu} x_a(t) dt.
\end{equation}

\begin{table*}[h!]
\centering
\caption{Overlapping times: fluid vs adjusted vs simulation}\label{tbl:anal_vs_sim_mmn}
\begin{tabular}{c|c|c|c|c|c|c}
\hline
\(\alpha\) & \(\beta\) & \(\rho\) & \(\tau\) & Fluid (error) & Adjusted (error) & Simulation (variance) \\ \hline
0.5 & 0.3 & 0.8 & 3 & 29.46 (15.1\%) & 31.52 (9.2\%) & 34.71 (34.60) \\
0.5 & 0.3 & 0.8 & 6 & 26.19 (28.0\%) & 34.69 (4.6\%) & 36.37 (32.78) \\
0.5 & 0.5 & 0.8 & 3 & 41.02 (17.0\%) & 46.54 (5.8\%) & 49.40 (47.23) \\
0.5 & 0.5 & 0.8 & 6 & 48.64 (19.5\%) & 61.19 (1.2\%) & 60.45 (46.31) \\
0.5 & 0.5 & 0.8 & 9 & 15.02 (18.7\%) & 16.71 (9.5\%) & 18.47 (21.10) \\
1.0 & 0.3 & 0.8 & 9 & 27.19 (15.8\%) & 31.44 (2.6\%) & 32.29 (28.43) \\
1.0 & 0.5 & 0.8 & 3 & 28.24 (15.6\%) & 33.66 (0.6\%) & 33.45 (28.11) \\
1.0 & 0.5 & 0.8 & 9 & 36.13 (11.6\%) & 42.98 (5.2\%) & 40.85 (31.76) \\
\hline
\end{tabular}
\end{table*}

\subsection{Numerical examples} \label{sec:numerical}
We conduct numerical experiments to see how accurately the adjusted fluid limits  approximate overlapping times. We consider an \(M_t/M/n\) queue with the arrival process having a rate \(\lambda(t)\) function as: 
\begin{equation}
\lambda(t) = \left(\beta \sin(\alpha t) + \lambda\right)\cdot n \cdot \rho.\label{eqn:lambda_t_mmn}
\end{equation}
The parameter \(\beta\) determines the amplitude of the arrival rate change, \(\alpha\) determines the frequency, \(\lambda\) denotes the baseline arrival rate, \(n\) represents the number of servers, and \(\rho\) designates overall mean traffic intensity. 


Table~\ref{tbl:anal_vs_sim_mmn} provides a comparison of the fluid model, the adjusted model, and the simulation with the rate function (\ref{eqn:lambda_t_mmn}) having $n=30$, $\mu=1.0$, and $\lambda=1.0$. We conduct 10,000 independent simulation runs for comparison. As we see from Table~\ref{tbl:anal_vs_sim_mmn}, the inaccuracy of the fluid model in approximating the number of customers also propagates to the estimation of overlapping times.
In contrast, the adjusted model performs quite well in approximating the overlapping times.


Figure~\ref{fig:histogram} plots the histogram of the overlapping times at different arrival times (\(\tau\)) under the same setting (\(\alpha = 0.5, \beta=0.5, \rho=0.8, n=30\)) as Figures~\ref{fig:inaccurate_fluid} and \ref{fig:accurate_adjusted}. Though we do not derive the distribution of the overlapping times for time-varying multi-server queues, the distributions from the simulation shows an exponential shape.

\begin{figure*}[t!]
	\begin{center}
		\subfigure[\(\tau = 3\)] {
		\includegraphics[width = .4\textwidth]{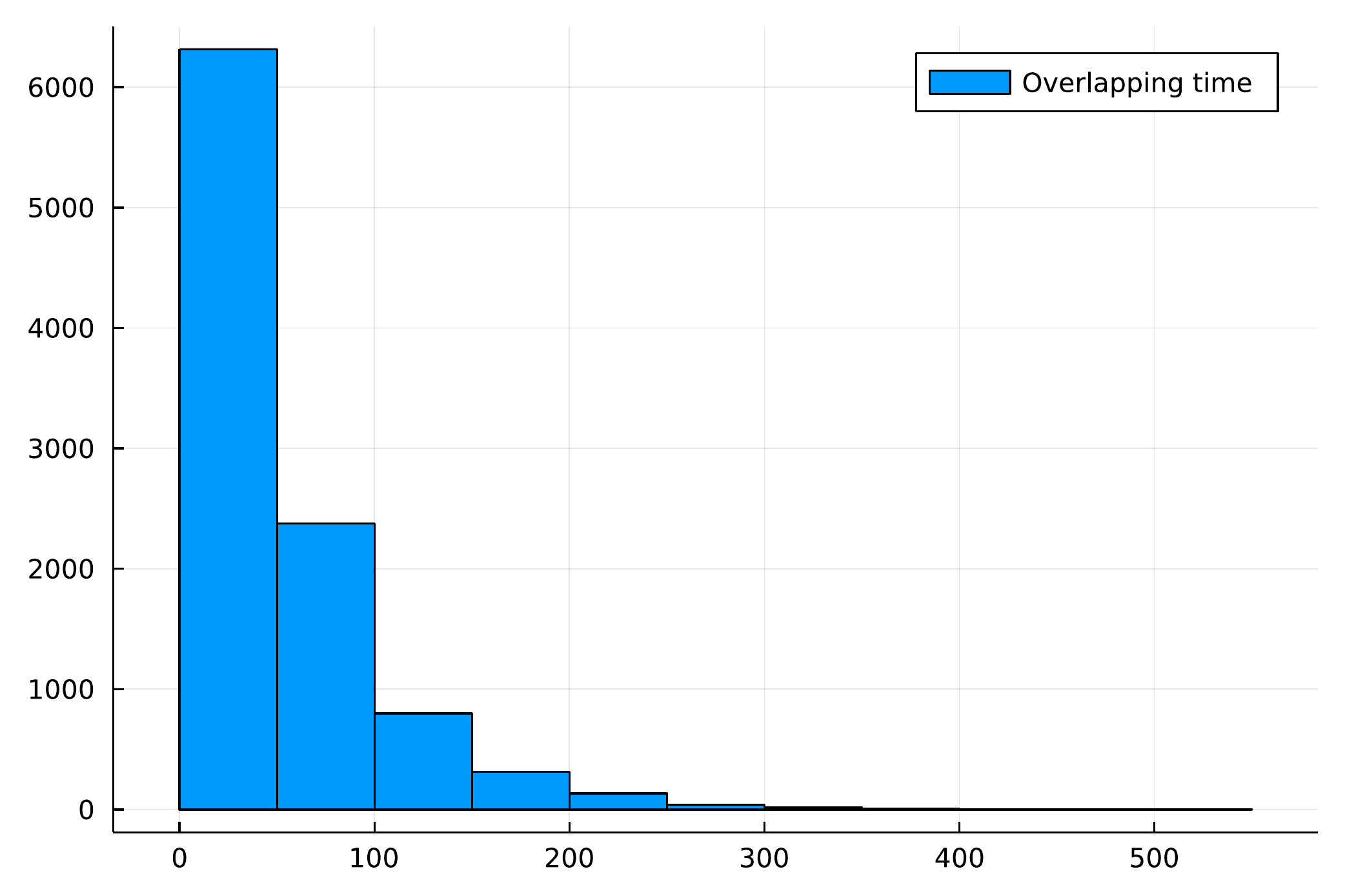}}\subfigure[\(\tau = 5\)] {
		\includegraphics[width = .4\textwidth]{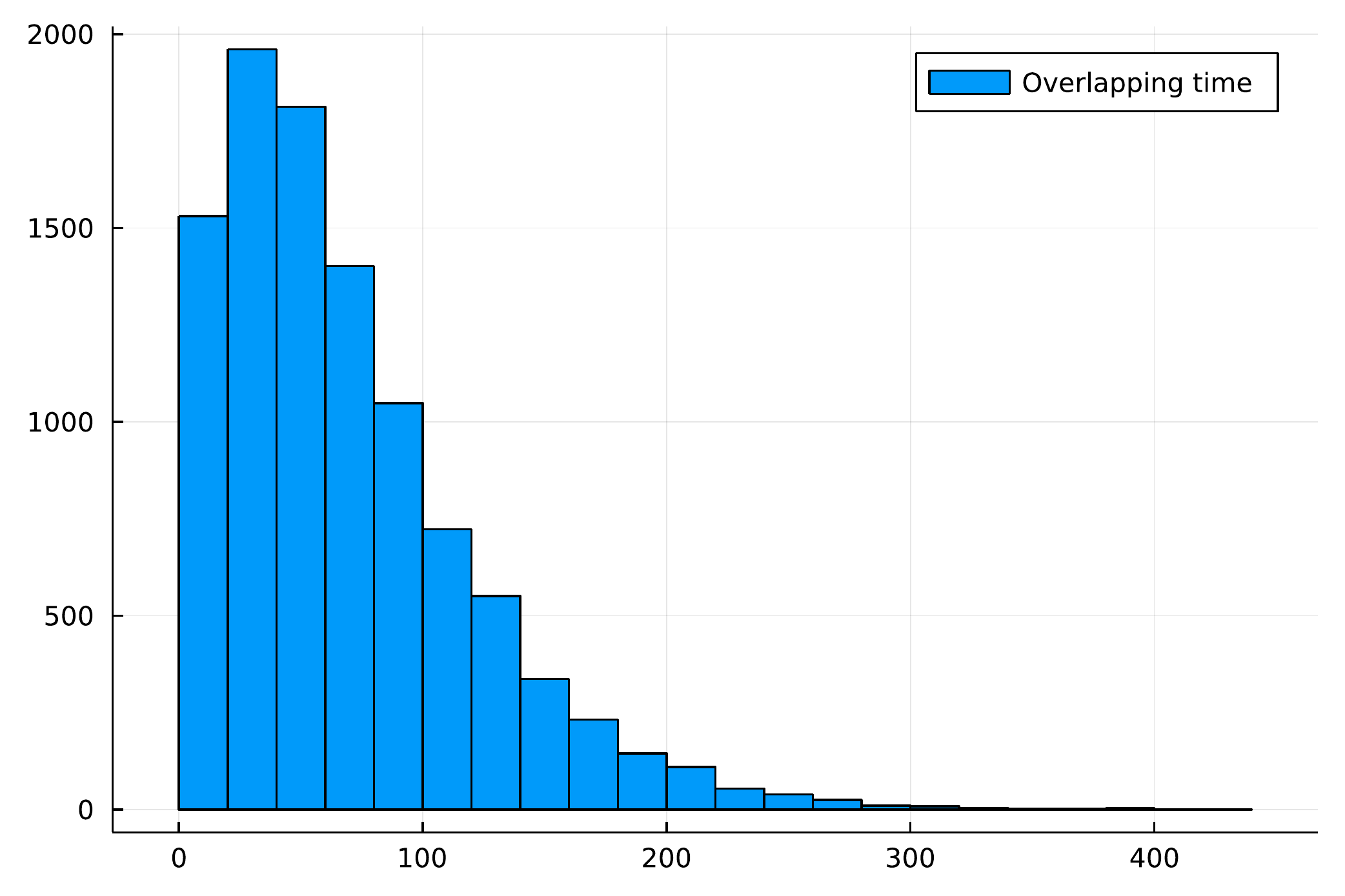}}
		\subfigure[\(\tau = 7\)] {
		\includegraphics[width = .4\textwidth]{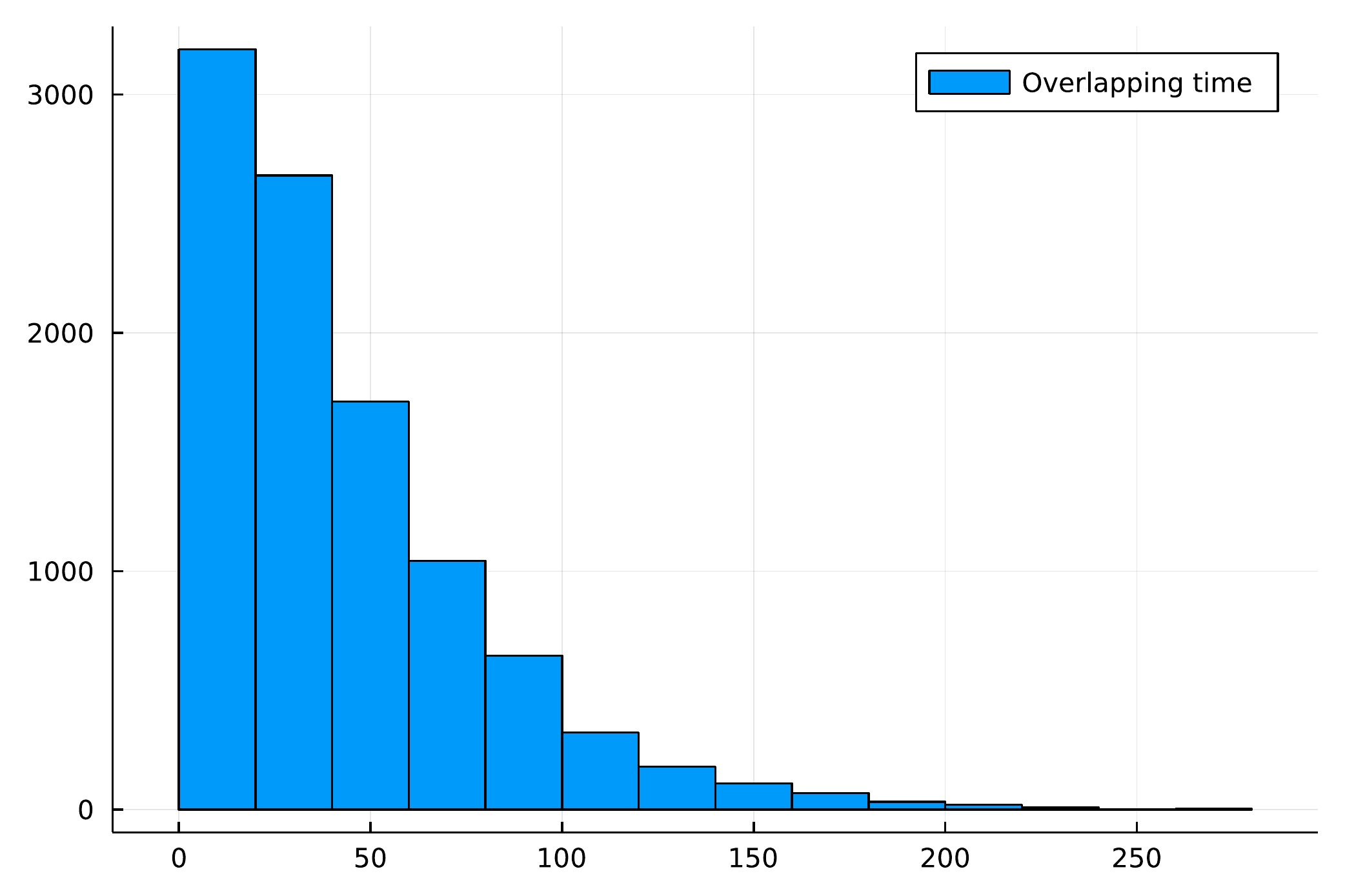}}\subfigure[\(\tau = 9\)] {
		\includegraphics[width = .4\textwidth]{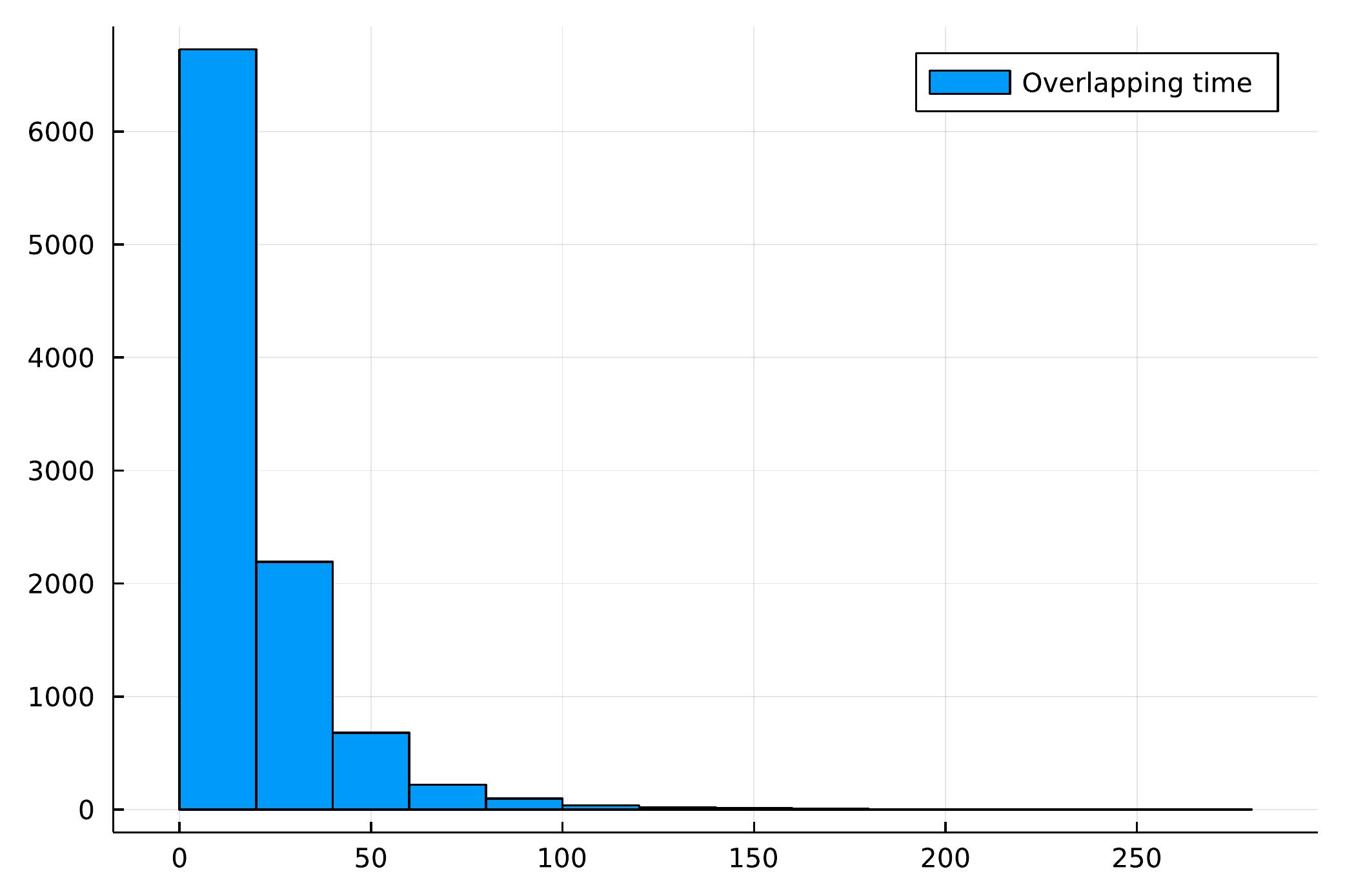}}
\end{center}
\caption{Histogram of overlapping times} \label{fig:histogram}
\end{figure*}

\section{Overlapping time for infinite-server queues} \label{sec:infinite}
This section extends the discussion from multi-server queues to the infinite-server queue. There are two notable things for the infinite-server queue compared with multi-server queues. First, infinite-server queues do not have the issues caused by lingering around critically loaded points. It means that we do not need the adjusted model. In fact, the adjusted model is identical to the fluid model in infinite-server queues. Second, in this case, we can derive a closed-form expression of the number of customers in the fluid limit.

Similar to the multi-server case, we construct a sample path for an \(M_t/M/\infty\) queue as follows:
\[X(t) = X(0) + Y_1\left(\int_0^t \lambda(s) ds\right) - Y_2\left(\int_0^t \mu X(s) ds\right).\]

We define a sequence of stochastic processes indexed by \(\eta\) as follows:
\[X^{\eta}(t) = X^{\eta}(0) + Y_1\left(\int_0^t \eta \lambda(s) ds\right) - Y_2\left(\int_0^t \eta \mu X^{\eta}(s) ds\right).\]

\begin{lemma}
\[\lim_{\eta\rightarrow \infty}\frac{X^{\eta}(t)}{\eta} = x(t)\text{ a.s.,}\]
where \(x(t)\) is the solution to the following ODE:
\[\frac{d}{dt}x(t) = \lambda(t) - \mu x(t) \text{ with } x(0) = \lim_{\eta\rightarrow \infty}\frac{X^{\eta}(0)}{\eta}.\]
\begin{proof}
This is a special case of the result in Kurtz \cite{Kurtz1978} and Mandelbaum et al. \cite{Mandelbaum1998}.
\end{proof}
\end{lemma}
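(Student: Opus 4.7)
The plan is to follow the standard Kurtz-style density-dependent Markov chain argument, using functional strong law of large numbers (FSLLN) for the driving Poisson processes together with Gronwall's inequality. First I would center the unit-rate Poisson processes by writing $Y_i(u) = u + \tilde{Y}_i(u)$, where $\tilde{Y}_i$ is a mean-zero martingale. Dividing the defining equation by $\eta$ then yields
\begin{equation*}
\frac{X^{\eta}(t)}{\eta} = \frac{X^{\eta}(0)}{\eta} + \int_0^t \lambda(s)\,ds - \int_0^t \mu \frac{X^{\eta}(s)}{\eta}\,ds + M_1^{\eta}(t) - M_2^{\eta}(t),
\end{equation*}
where $M_1^{\eta}(t) = \frac{1}{\eta}\tilde{Y}_1\!\left(\eta\int_0^t \lambda(s)\,ds\right)$ and $M_2^{\eta}(t) = \frac{1}{\eta}\tilde{Y}_2\!\left(\eta\int_0^t \mu X^{\eta}(s)\,ds\right)$.

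Next I would subtract the integrated ODE $x(t) = x(0) + \int_0^t \lambda(s)\,ds - \int_0^t \mu x(s)\,ds$ and use that the drift $x \mapsto \mu x$ is Lipschitz with constant $\mu$. This gives
\begin{equation*}
\left|\frac{X^{\eta}(t)}{\eta} - x(t)\right| \le \left|\frac{X^{\eta}(0)}{\eta} - x(0)\right| + \sup_{s\le T}|M_1^{\eta}(s)| + \sup_{s\le T}|M_2^{\eta}(s)| + \mu \int_0^t \left|\frac{X^{\eta}(s)}{\eta} - x(s)\right| ds.
\end{equation*}
By hypothesis the first term vanishes a.s., and the FSLLN for a unit-rate Poisson process ($\sup_{u\le c}\eta^{-1}|\tilde{Y}_i(\eta u)| \to 0$ a.s. for any $c>0$) handles $M_1^{\eta}$ immediately since $\int_0^T \lambda(s)\,ds$ is a deterministic constant. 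Gronwall's inequality then upgrades the a.s. pointwise bound to uniform convergence $\sup_{t\le T}|X^{\eta}(t)/\eta - x(t)| \to 0$ a.s.

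The one subtle step is handling $M_2^{\eta}$, whose time change $\int_0^t \mu X^{\eta}(s)\,ds$ is random and a priori not obviously bounded in $\eta$. The plan is to establish an a.s. deterministic upper envelope for $X^{\eta}(t)/\eta$ before invoking the FSLLN on $\tilde{Y}_2$. The cleanest route is to dominate $X^{\eta}(t)$ by the pure-birth process $\bar X^{\eta}(t) := X^{\eta}(0) + Y_1(\eta \int_0^t \lambda(s)\,ds)$, which, by the same FSLLN argument (now with a deterministic time change), satisfies $\bar X^{\eta}(t)/\eta \to x(0) + \int_0^t \lambda(s)\,ds$ uniformly on $[0,T]$ a.s. In particular, on the a.s. event that this convergence holds, $X^{\eta}(s)/\eta$ is bounded by a deterministic constant $C_T$ uniformly in $s\in[0,T]$ for all sufficiently large $\eta$, so the time change in $M_2^{\eta}$ stays in $[0,\eta\mu C_T T]$ and the FSLLN applies.

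The main obstacle, then, is this uniform-boundedness step rather than the Gronwall closure; once the envelope is in place, the rest is routine. After that, the conclusion follows by piecing together the a.s. events on which $X^{\eta}(0)/\eta \to x(0)$, the two FSLLN convergences hold, and the domination bound is valid, and applying Gronwall pathwise on this intersection of full-measure events.
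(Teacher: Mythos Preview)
Your proof is correct and is precisely the Kurtz-style density-dependent fluid-limit argument that the paper merely cites rather than spells out, so the approaches coincide. One small slip: the time change inside your $M_2^{\eta}$ should be $\eta\int_0^t \mu\,\frac{X^{\eta}(s)}{\eta}\,ds$ (equivalently $\int_0^t \mu X^{\eta}(s)\,ds$), not $\eta\int_0^t \mu X^{\eta}(s)\,ds$; your subsequent bound $[0,\eta\mu C_T T]$ shows you intended the former.
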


If we assume an empty queue at the beginning, i.e., \(x(0) = 0\), we have the closed-form solution given by:
\[x(t) = e^{-\mu t}\left(v(t)-v(0)\right),\]
where \(v'(t)=\lambda(t)e^{\mu t}\).

Let us consider the following time-varying arrival rate showing the cyclic behavior of the arrival process:
\[\lambda(t) = \beta \sin(\alpha t) + \lambda.\]
Then, we can obtain the closed-form expression of \(v(t)\) as follows:
\begin{align}
    &v(t) = \frac{\beta}{\mu}\left(1+\frac{\alpha^2}{\mu^2}\right)^{-1}\left(\sin \alpha t -\frac{\alpha}{\mu}\cos \alpha t\right)e^{\mu t} + \frac{\lambda}{\mu}e^{\mu t}+ C.\nonumber\\
\end{align}
We then have
\begin{align}
    &v(t)-v(0)=\frac{\beta}{\mu}\left(1+\frac{\alpha^2}{\mu^2}\right)^{-1}\left(\sin \alpha t -\frac{\alpha}{\mu}\cos \alpha t\right)e^{\mu t} \nonumber \\
    &\quad\quad\quad\quad\quad+ \frac{\lambda}{\mu}e^{\mu t} + \frac{\alpha\beta}{\mu^2}\left(1+\frac{\alpha^2}{\mu^2}\right)^{-1}-\frac{\lambda}{\mu}\nonumber\\
    &\quad\quad\quad\quad= \frac{\beta}{\mu}\left(1+\frac{\alpha^2}{\mu^2}\right)^{-1}\left\{\left(\sin \alpha t -\frac{\alpha}{\mu}\cos \alpha t\right)e^{\mu t}\right.\nonumber\\
    &\quad\quad\quad\quad\quad\left.+\frac{\alpha}{\mu}\right\}+ \frac{\lambda}{\mu}\left(e^{\mu t}-1\right).\nonumber
\end{align}
We can thus obtain the closed-form expression of $x(t)$ as follows:
\begin{align}
    &x(t) = e^{-\mu t}\left(v(t)-v(0)\right)\nonumber\\
    &\;\;\;\;\;= \frac{\beta}{\mu}\left(1+\frac{\alpha^2}{\mu^2}\right)^{-1}\left\{\sin \alpha t -\frac{\alpha}{\mu}\cos \alpha t +\frac{\alpha}{\mu}e^{-\mu t}\right\}\nonumber\\
    &\;\;\;\;\;\quad+ \frac{\lambda}{\mu}\left(1-e^{-\mu t}\right).\label{eqn:x_inf}
\end{align}

\begin{figure}[t!]\label{fig:one_cycle}
	  \begin{center}
	  \includegraphics[width = .45\textwidth]{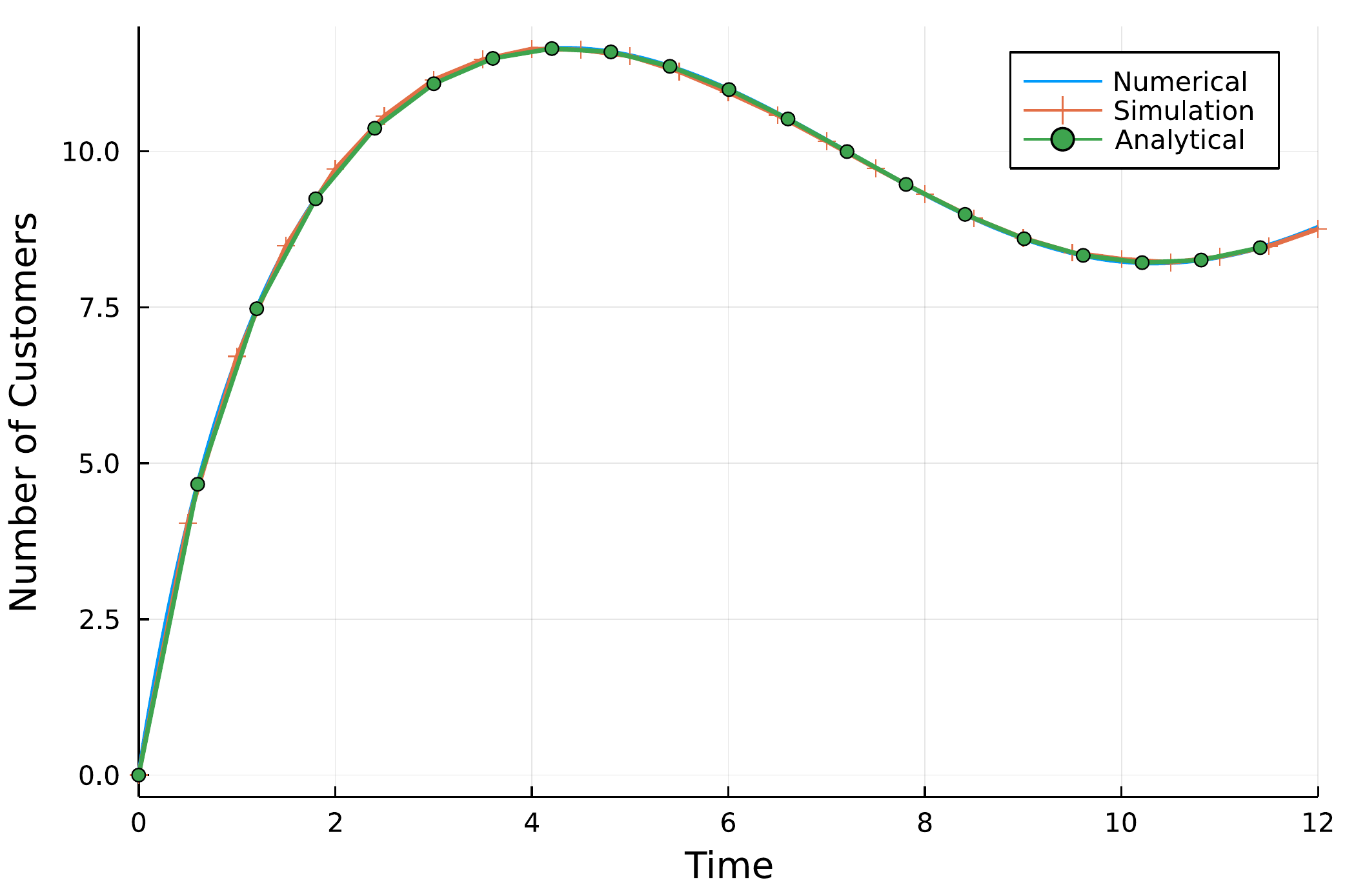}
		  \caption{Mean dynamics of the system (analytical vs. numerical vs. simulation)} \label{fig:comp_inf}
	  \end{center}
\end{figure}

In Figure~\ref{fig:comp_inf}, we plot the number of customers in an $M_t/M/\infty$ queue with the rate function (\ref{eqn:lambda_t_mmn}) and parameters \(\alpha = 0.5\), \(\beta = 2.0\), and \(\lambda = 10.0\). We can observe that the fluid limit accurately approximates the mean behavior of the \(M_t/M/\infty\) queue.

Since a virtual customer gets service immediately upon arrival in the infinite-server system, we do not need to consider the \(Z(\tau,t)\) process to compute the overlapping time. The customer overlaps with other customers until time \(\tau+1/\mu\) on average. We then approximate the mean overlapping time of a virtual customer arriving at time \(\tau\) as:

\begin{align}
& E[O_{\tau}] \approx\int_{t=\tau}^{\tau+\frac{1}{\mu}}x(t)dt \nonumber\\
 & =\frac{\beta}{\mu}\bigg(1+\frac{\alpha^{2}}{\mu^{2}}\bigg)^{-1}\bigg\{-\frac{1}{\alpha}\big(\cos\alpha(\tau+\frac{1}{\mu})-\cos\alpha\tau\big) \nonumber\\
 & -\frac{1}{\mu}\big(\sin\alpha(\tau+\frac{1}{\mu})-\sin\alpha t\big)-\frac{\alpha(e^{-\mu(\tau+\frac{1}{\mu})}-e^{-\mu\tau})}{\mu^{2}}\bigg\} \nonumber\\
 & +\frac{\lambda}{\mu^{2}}\big(1+e^{-\mu(\tau+\frac{1}{\mu})}-e^{-\mu\tau}\big). \label{eqn:closed_form_approx}
\end{align}

Table~\ref{tbl:anal_vs_sim_inf} provides a numerical study for estimating the mean overlapping times for infinite-server queues with the same setting as Figure~\ref{fig:comp_inf}. The ``Analytical'' column is from the closed-form solution, the ``Numerical'' column provides the solutions by numerically solving the ODE, and the ``Simulation'' column contains the result of 10,000 simulation runs.

As expected, the fluid limit accurately approximates the mean overlapping times of time-varying infinite-server queues. Also, the numerical solution verifies the correctness of the closed-form expression in \eqref{eqn:closed_form_approx}.

\begin{table}[h!]
\centering
\caption{Overlapping times: analytical (closed-form) vs numerical  vs simulation}\label{tbl:anal_vs_sim_inf}
\begin{tabular}{c|c|c|c}
\hline
Arrival time  & Analytical & Numerical & Simulation\\ \hline
 3.0 & 11.41 & 11.41 & 11.42 \\ 
 5.0 & 11.30 & 11.30 & 10.96 \\ 
 7.0 & 9.74 & 9.74 & 9.40  \\
 9.0 & 8.39  & 8.39 & 8.38  \\ \hline 
\end{tabular}
\end{table}
\section{Conclusion}\label{sec:conclusion}
This paper proposes a fluid-limit-based approach for approximating overlapping times of virtual customers arriving at arbitrary time points in a multi-server queueing system. The standard fluid limit estimation becomes inaccurate when the system lingers around the critically loaded point. We fix this issue by using an adjusted fluid model, and the numerical study shows that it significantly improves the estimation accuracy.

We extend our discussion to the infinite-server setting, where the fluid limit no longer has the lingering problem. We derive the closed-form expression of the overlapping time, and the numerical results verify the correctness of our approach.

There can be a few future extensions of this work. First, the distribution of overlapping time is useful in decision-making regarding risk management. We do not derive the distribution of overlapping times in this work since deriving the variance of the limit process could be challenging. Second, we mainly focus our discussion on a Markovian setting in this paper. We will extend this result to more general settings using other limiting theories in our future work.

\bibliographystyle{abbrv}
\bibliography{ref.bib}







\end{document}